\documentclass[11pt,reqno]{article}

\setlength{\hoffset}{-2cm}
\setlength{\voffset}{-1.3cm}
\setlength{\textwidth}{15.75cm}
\setlength{\textheight}{22.275cm}
\usepackage{pb-diagram}

\usepackage{pb-diagram}

\usepackage[ruled]{algorithm2e}

\usepackage{amsfonts}
\usepackage{amscd}
\usepackage{color}
\usepackage{amsmath}
\usepackage{float}
\usepackage{amsthm}
\usepackage{amssymb}
\usepackage{mathrsfs}
\usepackage{amstext}
\usepackage{graphicx}
\usepackage{tikz}
\usepackage{multirow}
\usepackage{verbatim}
\evensidemargin0.5cm

\newcommand{\reallywidehat}[1]{%
	\savestack{\tmpbox}{\stretchto{%
			\scaleto{%
				\scalerel*[\widthof{\ensuremath{#1}}]{\kern-.6pt\bigwedge\kern-.6pt}%
				{\rule[-\textheight/2]{1ex}{\textheight}}
			}{\textheight}%
		}{0.5ex}}%
	\stackon[1pt]{#1}{\tmpbox}%
}

\numberwithin{equation}{section}
\theoremstyle{plain}
\newtheorem{satz}{Theorem}[section]
\newtheorem{defi}[satz]{Definition}
\newtheorem{cor}[satz]{Corollary}
\newtheorem{lem}[satz]{Lemma}
\newtheorem{prop}[satz]{Proposition}

\newcommand{\re}{\ensuremath{\mathbb{R}}}\newcommand{\N}{\ensuremath{\mathbb{N}}}
\newcommand{\zz}{\ensuremath{\mathbb{Z}}}

\newcommand{\Z}{{\ensuremath{\zz}^d}}

\newcommand{\R}{\ensuremath{{\re}^d}}
\newcommand{\qe}{\ensuremath{\mathbb{Q}}}

\setcounter{section}{0}

\newcommand{\Bspt}{\ensuremath{\mathbf{B}^r_{p,\theta}}}

\newcommand{\Bo}{\ensuremath{\mathring{\mathbf B}_{p,\theta}^r}}

\newcommand{\cf}{\ensuremath{\mathcal F}}

\newcommand{\supp}{{\rm supp \, }}

\newcommand{\bproof}{\begin{proof}}
\newcommand{\eproof}{\end{proof}}

\newlength{\fixboxwidth}
\setlength{\fixboxwidth}{\marginparwidth}
\addtolength{\fixboxwidth}{-0pt}

\usepackage[normalem]{ulem}

\newcommand{\be}{\begin{equation}}
\newcommand{\ee}{\end{equation}}
\newcommand{\beq}{\begin{eqnarray}}
\newcommand{\beqq}{\begin{eqnarray*}}
\newcommand{\eeq}{\end{eqnarray}}
\newcommand{\eeqq}{\end{eqnarray*}}

\begin{document}
\title{On the orthogonality of the Chebyshev-Frolov lattice and applications}

\author{Christopher Kacwin\footnote{Email: kacwin@ins.uni-bonn.de}, Jens
Oettershagen\footnote{Email: oettershagen@ins.uni-bonn.de}, Tino Ullrich\footnote{Corresponding author. Email:
tino.ullrich@hausdorff-center.uni-bonn.de} \\\\
	Institute for Numerical Simulation, 53115 Bonn, Germany\\Hausdorff-Center for Mathematics\\
}

\date{\today}

\maketitle


\begin{abstract} 
We deal with lattices that are generated by the Vandermonde matrices associated to the roots of Chebyshev-polynomials. 
If the dimension $d$ of the lattice is a power of two, i.e. $d=2^m, m \in \mathbb{N}$, the resulting 
lattice is an admissible lattice in the sense of Skriganov \cite{Sk94}. These are related to the Frolov cubature formulas, 
which recently drew attention due to their optimal convergence rates \cite{UU15} in a broad range of Besov-Lizorkin-Triebel spaces. 
We prove that the resulting lattices are orthogonal and possess a lattice representation matrix with entries not larger than $2$ (in modulus). 
This allows for an efficient enumeration of the Frolov cubature nodes in the $d$-cube $[-1/2,1/2]^d$ up to dimension $d=16$.
\end{abstract}

\section{Introduction}

A lattice is a set of points in $\R$ given by 
$$
  \Gamma_A = A(\Z) = \Big\{\sum_{j=1}^d k_j a_j : (k_1,\ldots,k_d) \in \Z \Big\}\,,
$$
where the $a_j \in \R$ are the columns of the generating matrix $A \in \re^{d\times d}$. Of particular interest are \emph{admissible lattices} $\Gamma$ in the sense of Skriganov \cite{Sk94} which fulfill
\begin{equation}\label{adm}
    \inf\limits_{\gamma \in \Gamma \setminus \{0\}} \Big|\prod\limits_{i=1}^d \gamma_i \Big| > 0\,.
\end{equation}
This immediately implies that any vector in the lattice (except the zero vector) consists of only non-vanishing components. 
However, the condition in \eqref{adm} is much stronger than that and crucial for the performance of the Frolov \cite{Fr76} cubature
formula for multivariate functions with $\supp f \subset [-1/2,1/2]^d$ given by 
\begin{equation}\label{Fr}
  \Phi(n,A;f) := \frac{1}{n} \sum\limits_{k \in \Z} f\left( ( n\det(A) )^{-1/d} A k \right) \, ,\quad n\in \N\,,
\end{equation}
see also Bykovskii \cite{By85}, Dubinin \cite{Du92, Du97}, Temlyakov
\cite{Tem93, Tem03} and the recent papers by M. Ullrich \cite{MU14, MU16}, Nguyen, M. Ullrich and T. Ullrich \cite{UU15, NUU15}\footnote{The modifications proposed in \cite{NUU15} lead to optimal cubature formulae also for functions without homogeneous boundary condition.} 
and Krieg, Novak \cite{KrNo16}. Its asymptotic performance is well-understood as it provides optimal
convergence rates for several classes of functions with bounded mixed derivative and compact support, given that $\Gamma_A = A(\Z)$ is admissible.

However, there is a degree of freedom in choosing the lattice generating matrix $A$ in \eqref{Fr} such that property
\eqref{adm} holds which significantly affects the numerical properties of the algorithm. In the original paper by
Frolov \cite{Fr76} a Vandermonde matrix
\begin{equation}\label{vanderm}
    A = \left(\begin{array}{cccc}
                1 & \xi_1 & \cdots & \xi_1^{d-1}\\
                1 & \xi_2 & \cdots & \xi_2^{d-1}\\
                \vdots & \vdots & \ddots & \vdots\\
                1 & \xi_{d} & \cdots & \xi_d^{d-1}
              \end{array}\right)\,
\end{equation}
has been considered, where $\xi_1,\ldots,\xi_d$ are the real roots of an irreducible polynomial over $\mathbb{Q}$,
e.g., $P_d(x):=\prod_{j=1}^d (x-2j+1)-1$\,. The general principle of this construction has been elaborated in detail by Temlyakov
in his book \cite[IV.4]{Tem93} based on results on algebraic number theory, see Borevich, Shafarevich \cite{BoSh66} or Gruber, Lekkerkerker \cite{GrLekk87}. 

The above polynomial $P_d$ has a striking disadvantage, namely that the real roots of the polynomials grow with $d$ and
therefore the entries in $A$ get huge due to the Vandermonde structure. In fact, sticking to the structure
\eqref{vanderm}, it seems to be a crucial task to find proper irreducible polynomials with real roots of small modulus. In
\cite[IV.4]{Tem93} Temlyakov proposed the use of rescaled Chebyshev polynomials $Q_d$. To be more precise we use
for $x\in [-2,2]$
\begin{equation}\label{Cheb}
Q_d(x) = 2 T_d(x/2)\quad\mbox{with}\quad T_d(\cdot):=\cos(d\arccos(\cdot))\,.
\end{equation}
The polynomials $Q_d$ belong to $\mathbb{\zz}[x]$ and have leading coefficient $1$. Its roots are real and given by
\begin{equation}\label{zeros}
    \xi_{k} = 2\cos\Big(\frac{\pi(2k-1)}{2d}\Big) \quad,\quad k=1,...,d\,.
\end{equation}
In the sequel we will denote the Vandermonde matrix \eqref{vanderm} with the scaled Chebyshev roots \eqref{zeros} by the letter $T$ 
and call the corresponding lattice $\Gamma_T = T(\Z)$ a {\em Chebyshev lattice}.
Our main result reads as follows. 

\begin{satz}\label{main} The $d$-dimensional Chebyshev lattice $\Gamma_T = T(\Z)$ is orthogonal. In particular, there
exists a lattice representation $\tilde{T}=TS$ with $S\in \text{SL}_d(\zz)$ such that 
\begin{itemize}
 \item[(i)] $\tilde{T}_{k,\ell} \in [-2,2]$ for $k,\ell  =1,...,d$ and
 \item[(ii)] $\tilde{T}^{\top}\tilde{T} = \mathrm{diag}(d, 2d,\ldots,2d)$.
 \end{itemize}
\end{satz}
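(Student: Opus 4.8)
The plan is to exhibit the orthogonal basis explicitly by replacing the monomial columns of $T$ with the evaluations of the (rescaled) Chebyshev polynomials at the nodes, exploiting their discrete orthogonality. Write $\theta_k:=\frac{\pi(2k-1)}{2d}$, so that $\xi_k=2\cos\theta_k$ and, by \eqref{Cheb}, $Q_n(\xi_k)=2T_n(\cos\theta_k)=2\cos(n\theta_k)$. I would set $p_0:=1$ and $p_n:=Q_n$ for $n=1,\dots,d-1$, and let $\tilde T$ be the matrix with entries $\tilde T_{k,n+1}:=p_n(\xi_k)$. Property (i) is then immediate: the first column is $(1,\dots,1)^\top$, and every other entry equals $2\cos(n\theta_k)\in[-2,2]$.

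For the unimodular transformation, recall from the excerpt that each $Q_n\in\zz[x]$ is monic of degree $n$; the same is trivially true of $p_0=1$. Expressing $p_0,\dots,p_{d-1}$ in the monomial basis $1,x,\dots,x^{d-1}$ therefore produces an integer matrix $S$ that is upper triangular with unit diagonal, whence $\det S=1$ and $S\in\mathrm{SL}_d(\zz)$. Since column $\ell$ of $T$ equals $(\xi_k^{\ell-1})_k$, the expansion $p_n(\xi_k)=\sum_{\ell}S_{\ell,n+1}\,\xi_k^{\ell-1}$ is precisely the statement $\tilde T=TS$, so $\tilde T$ represents the same lattice $\Gamma_T$. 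I would stress here that the constant column is taken to be $1$ rather than $Q_0\equiv2$: choosing $Q_0$ would force $\det S=2$ and destroy unimodularity, and this choice is exactly what produces the asymmetric first diagonal entry in (ii).

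It remains to compute $\tilde T^\top\tilde T$, whose $(m+1,n+1)$ entry is $\sum_{k=1}^d p_m(\xi_k)\,p_n(\xi_k)$. Using $p_0\equiv1$, $p_n(\xi_k)=2\cos(n\theta_k)$ and the product-to-sum identity, this reduces to the single-index sums $\sum_{k=1}^d\cos(p\theta_k)$. The core computation I expect to be the main obstacle is the identity $\sum_{k=1}^d\cos(p\theta_k)=0$ whenever $2d\nmid p$, which I would obtain by writing $\cos(p\theta_k)=\mathrm{Re}\,e^{ip\theta_k}$ and summing the resulting geometric series over $k$, with a small case distinction according to the parity of $p$. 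Because $0\le m,n\le d-1$ forces $m+n$ and $m-n$ to be divisible by $2d$ only when they vanish, this gives the discrete orthogonality $\sum_{k}\cos(m\theta_k)\cos(n\theta_k)$ equal to $d$ for $m=n=0$, to $d/2$ for $m=n\ge1$, and to $0$ for $m\ne n$. Consequently $\langle p_0,p_0\rangle=d$, $\langle p_n,p_n\rangle=4\cdot\frac d2=2d$ for $1\le n\le d-1$, and all off-diagonal inner products vanish, i.e.\ $\tilde T^\top\tilde T=\mathrm{diag}(d,2d,\dots,2d)$, which is (ii). Finally, since the columns of $\tilde T$ form a $\zz$-basis of $\Gamma_T$ and are pairwise orthogonal, the lattice $\Gamma_T$ is orthogonal, completing the argument.
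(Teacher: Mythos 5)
Your proof is correct, and its overall architecture coincides with the paper's: you construct exactly the same representation matrix $\tilde T$ (first column of ones, remaining columns $2\cos((l-1)\theta_k)$) and verify $\tilde T^{\top}\tilde T=\mathrm{diag}(d,2d,\ldots,2d)$ by the same discrete-orthogonality computation, reducing products of cosines to exponential sums and geometric series. Where you genuinely diverge is in the justification that $S\in\mathrm{SL}_d(\zz)$. The paper proves a dedicated lemma (its Lemma 3.2) by binomial expansion of $\left(e^{\omega\pi\mathrm{i}}+e^{-\omega\pi\mathrm{i}}\right)^l$, showing that $\eta_1^l-\eta_l$ is an integer combination of $1,\eta_1,\ldots,\eta_{l-1}$ with coefficients independent of $\omega$, and then realizes $S$ as an explicit product of elementary column-operation matrices $S^{(3)}\cdots S^{(d)}$; in effect, it expands the monomials in the Chebyshev basis. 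You go the other way: you expand the Chebyshev polynomials in the monomial basis, and since each $Q_n$ is monic of degree $n$ with integer coefficients (a fact the paper states in the introduction), the change-of-basis matrix $S$ is an integer upper triangular matrix with unit diagonal, hence unimodular. Your route is shorter and cleaner, at the price of importing the monicity/integrality of $Q_n$ as a known fact, whereas the paper's Lemma 3.2 keeps the argument self-contained and produces the integer coefficients explicitly. Your observation that the constant column must be $1$ rather than $Q_0\equiv 2$ --- otherwise $\det S=2$ and the lattice would change --- is a point the paper handles only implicitly (the first column of $T$ is already $(1,\ldots,1)^{\top}$ and is never touched by the column operations), and it correctly explains the asymmetric first entry $d$ in the diagonal of $\tilde T^{\top}\tilde T$.
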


However, Chebyshev-polynomials are not always irreducible over $\mathbb{Q}$. In fact, the polynomials $Q_d$ are irreducible if and only if $d = 2^m$ \cite[IV.4]{Tem93}. Hence, 
a Chebyshev lattice $\Gamma_T$ is admissible if and only if $d=2^m$. In that case we call $\Gamma_T$ a {\em Chebyshev-Frolov lattice} and obtain the following corollary. 
\begin{cor} If $d=2^m$ for some $m\in \N$ the Chebyshev-Frolov lattice $\Gamma_T = T(\Z)$ and its dual lattice are both 
orthogonal and admissible. In particular, there is a lattice representation
for $\Gamma$ given by $\tilde{T} = Q D$ with a diagonal matrix $D = \mathrm{diag}(\sqrt{d},\sqrt{2d},...,\sqrt{2d})$  and an orthogonal matrix $Q$. For the dual
lattice $\Gamma^{\perp}$ we have the representation $\tilde{T}^{\perp} =Q D^{-1}$. 
\end{cor}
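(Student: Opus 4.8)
The plan is to read off everything from the orthogonal representation $\tilde{T}=TS$, $S\in\mathrm{SL}_d(\zz)$, provided by Theorem~\ref{main}, so that the corollary reduces to linear algebra together with a single observation about the dual lattice.

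First I would extract the diagonalization. Put $D:=\mathrm{diag}(\sqrt{d},\sqrt{2d},\ldots,\sqrt{2d})$, so that $D^2=\mathrm{diag}(d,2d,\ldots,2d)=\tilde{T}^{\top}\tilde{T}$ by part (ii) of Theorem~\ref{main}. Setting $Q:=\tilde{T}D^{-1}$ and computing
\[
  Q^{\top}Q=D^{-1}\tilde{T}^{\top}\tilde{T}D^{-1}=D^{-1}D^{2}D^{-1}=I_d
\]
shows $Q\in O(d)$, whence $\tilde{T}=QD$ is the asserted representation. Since $\tilde{T}^{\top}\tilde{T}=D^2$ is diagonal, the basis of $\Gamma_T$ given by the columns of $\tilde{T}$ consists of mutually orthogonal vectors, so $\Gamma_T$ is orthogonal.

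Next I would treat the dual lattice, using that a generating matrix of the dual of $\Gamma_A=A(\Z)$ is $(A^{\top})^{-1}$ and that $Q^{-1}=Q^{\top}$:
\[
  \tilde{T}^{\perp}:=(\tilde{T}^{\top})^{-1}=\big((QD)^{\top}\big)^{-1}=(DQ^{\top})^{-1}=QD^{-1}.
\]
This is the claimed representation, and because $Q$ is orthogonal while $D^{-1}$ is diagonal, the columns of $QD^{-1}$ are again mutually orthogonal, so $\Gamma_T^{\perp}$ is orthogonal as well.

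It remains to settle admissibility when $d=2^m$. For $\Gamma_T$ I would invoke the irreducibility of $Q_d$ over $\qe$ \cite[IV.4]{Tem93} and the standard fact that $\prod_{i=1}^{d}(Tk)_i$ equals, up to sign, the field norm of the nonzero algebraic integer $\sum_{j}k_j\xi^{j-1}$ and is therefore an integer of modulus at least $1$; this is exactly the admissibility \eqref{adm} noted before the corollary, so I abbreviate $c_0:=\inf_{\gamma\in\Gamma_T\setminus\{0\}}\big|\prod_{i}\gamma_i\big|>0$. For the dual I would exploit the representations just obtained: since $\tilde{T}^{\perp}=QD^{-1}=(QD)D^{-2}=\tilde{T}D^{-2}$ with $D^{-2}=\mathrm{diag}(1/d,1/(2d),\ldots,1/(2d))$, and since $D^{-2}(\Z)\subseteq\tfrac{1}{2d}\Z$, it follows that
\[
  \Gamma_T^{\perp}=\tilde{T}D^{-2}(\Z)\subseteq\tfrac{1}{2d}\,\tilde{T}(\Z)=\tfrac{1}{2d}\,\Gamma_T .
\]
As the product functional scales by $(2d)^{-d}$ and the infimum taken over a sublattice only increases, $\Gamma_T^{\perp}$ is admissible with $\inf_{\gamma\in\Gamma_T^{\perp}\setminus\{0\}}\big|\prod_i\gamma_i\big|\ge c_0(2d)^{-d}>0$. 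The only step that is not pure bookkeeping is this last containment: recognizing that orthogonality forces $\Gamma_T^{\perp}$ to lie inside the rescaled primal lattice $\tfrac{1}{2d}\Gamma_T$ (in fact as an index-$2$ sublattice), which lets me transfer admissibility from $\Gamma_T$ to $\Gamma_T^{\perp}$ without separately analyzing the codifferent $\tfrac{1}{Q_d'(\xi)}\zz[\xi]$. Everything else follows formally from $\tilde{T}=QD$.
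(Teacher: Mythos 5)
Your proposal is correct, and its core coincides with the paper's: setting $D^2=\tilde{T}^{\top}\tilde{T}$ via Theorem \ref{main}, defining $Q:=\tilde{T}D^{-1}$ and checking $Q^{\top}Q=I$, and computing $\tilde{T}^{-\top}=QD^{-1}$ for the dual is exactly the linear algebra behind the stated representations, while your field-norm/irreducibility argument for admissibility of $\Gamma_T$ when $d=2^m$ is the same Temlyakov--Frolov construction the paper cites. Where you genuinely depart from the paper is the admissibility of the dual lattice: the paper gets this for free from its Lemma \ref{dual:adm} (Skriganov's general result that the dual of any admissible lattice is admissible), whereas you avoid that lemma entirely by observing $\tilde{T}^{-\top}=\tilde{T}D^{-2}$ and hence
$\Gamma_T^{\perp}=\tilde{T}D^{-2}(\Z)\subseteq \tfrac{1}{2d}\,\tilde{T}(\Z)=\tfrac{1}{2d}\,\Gamma_T$,
which transfers admissibility with the explicit bound $\mathrm{Nm}(\Gamma_T^{\perp})\geq (2d)^{-d}\,\mathrm{Nm}(\Gamma_T)>0$. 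Your inclusion is correct, since $2d\,D^{-2}k=(2k_1,k_2,\ldots,k_d)\in\Z$, and so is the parenthetical index-$2$ claim (compare covolumes: $\det(D^{-2})\cdot(2d)^{d}=2$). The trade-off between the two routes: Skriganov's lemma is more general (it needs no orthogonality, applies to every admissible lattice, and the paper needs it anyway to justify that either $B$ or $B^{-\top}$ may generate the Frolov nodes), and it does not degrade the norm; your argument is elementary and self-contained, exploits the special structure $\tilde{T}^{\top}\tilde{T}=D^2$ of the Chebyshev-Frolov lattice, and makes the primal--dual relationship completely explicit (the dual is, up to an index-$2$ refinement, a $\tfrac{1}{2d}$-rescaled copy of the primal), at the price of only a crude, exponentially small lower bound $(2d)^{-d}$ on $\mathrm{Nm}(\Gamma_T^{\perp})$.
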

This observation significantly affects the runtime of an algorithm  enumerating the lattice points belonging to $[-1/2,1/2]^d$
which represents a first non-trivial step in the implementation of the Frolov cubature formula, see Section 4, 5. By heavily relying on the orthogonality of the 
respective Chebyshev lattice 
we give an upper bound in Section 5 for the number of points which have to be seen in order to enumerate the $N$ lattice points in the $d$-cube $[-1/2,1/2]^d$. We confirm the result with 
some numerical tests up to dimension $d=16$. It turns out that we do not have to touch more than $2.07^d\cdot N$ points of the lattice. 

Let us finally refer to a forthcoming paper by M. Ullrich and the authors for the implementation and comparison of the performance of 
Frolov's method to other up to date cubature formulas. 
There we will also pay special attention to the case $d \neq 2^m$.

{\bf Notation.} As usual $\N$ denotes the natural numbers, 
$\zz$ denotes the integers, 
and $\re$ the real numbers
. 
The letter $d$ is always reserved for the underlying dimension in $\R, \Z$ etc. We denote
with $(x,y)$ 
the usual Euclidean inner product in $\R$
. For $0<p\leq \infty$ we denote with $|\cdot |_p$ and $\|\cdot \|_p$ the ($d$-dimensional) discrete $\ell_p$-norm and the continuous $L_p$-norm on $\R$, respectively, 
where $B_p^d$ denotes the respective unit ball in $\re^d$.
With $\cf$ we denote the Fourier transform given by $\cf f(\xi):=(2\pi)^{-d/2}\int_{\R} f(x)\exp(-ix\cdot \xi)\,\mathrm{d}x$ for 
a function $f\in L_1(\R)$ and $\xi \in \R$. For two sequences of real numbers $a_n$ and $b_n$ we will write 
$a_n \lesssim b_n$ if there exists a constant $c>0$ such that 
$a_n \leq c\,b_n$ for all $n$. We will write $a_n \asymp b_n$ if 
$a_n \lesssim b_n$ and $b_n \lesssim a_n$. With $\mathrm{GL}_d:=\mathrm{GL}_d(\mathbb{R})$ we
denote the group of invertible matrices over $\mathbb{R}$, wheras $\mathrm{SO}_d:=\mathrm{SO}_d(\re)$ 
denotes the group of orthogonal matrices over $\re$ with unit determinant. With $\mathrm{SL}_d(\mathbb{Z})$ we 
denote the group of invertible matrices over $\zz$ with unit determinant.
The notation $D:=\mbox{diag}(x_1,...,x_d)$ with $x = (x_1,...,x_d) \in \R$ refers to the diagonal matrix $D \in \re^{d\times d}$ with $x$ at the diagonal. 
And finally, by $\mathbb{Z}[x]$ we denote the ring of polynomials with integer coefficients.

\section{Construction of admissible lattices}
In this section we will briefly recall the precise notions of a lattice, its dual lattice, orthogonal and admissible lattices. We will furthermore 
comment on different lattice representations. 

\begin{figure}[t]
\centering
	\includegraphics[height=6cm]{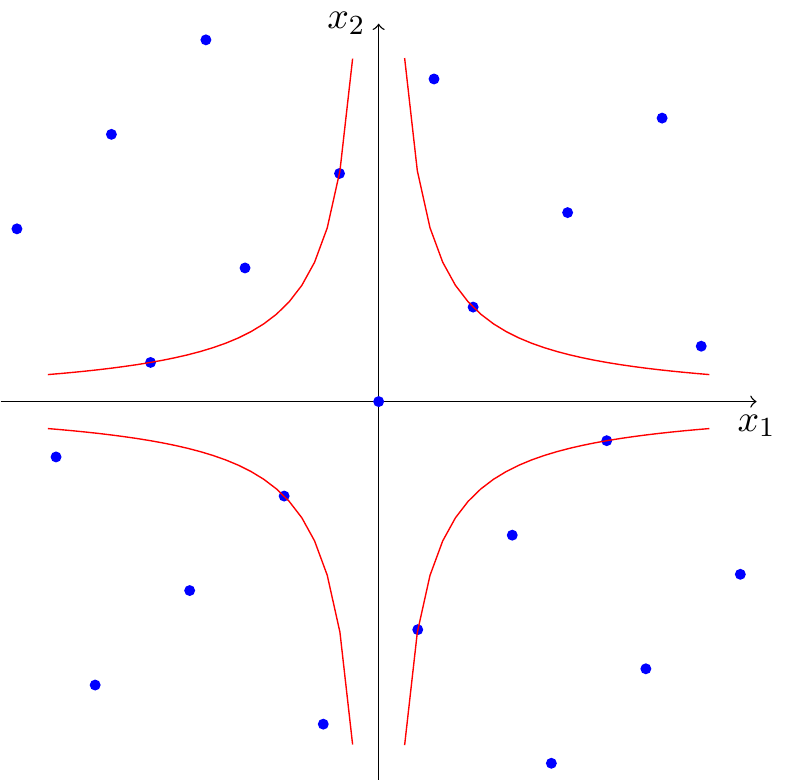}
	\caption{Admissible lattice and hyperbolic cross.} \label{fig_lattice_hc}
\end{figure}

\begin{defi}[Lattice]
A (full-rank) lattice $\Gamma\subset\R$ is a subgroup of $\R$ which is isomorphic to $\Z$ and spans the real vector
space $\R$. A set $\{a_1,...,a_d\} \subset \Gamma$ such that $\mbox{\rm span}_{\zz} \{a_1,...,a_d\} = \Gamma$ is called generating set of $\Gamma$. 
The matrix $A = (a_1|\cdots|a_d) \in \mbox{\rm GL}_d$
is called a generating matrix for $\Gamma$, i.e., we can write
$$
   \Gamma:=\{Ak:k\in \Z\}\,.
$$
\end{defi}
\noindent Let us further introduce the dual lattice. 
\begin{defi}[Dual lattice]
For a lattice $\Gamma \subset \R$ we define the dual lattice $\Gamma^{\bot}$ as
$$\Gamma^\bot =\{ x\in\R : (x,y)\in\zz \text{ for all } y\in\Gamma\}\,.$$
If $A$ is a generating matrix for $\Gamma$ then $A^{-\top}$ is a generating matrix for $\Gamma^{\bot}$.
\end{defi}
\noindent Crucial for the performance of the Frolov cubature formula \eqref{Fr} will be the notion of ``admissibility'' which is settled in the following definition.

\begin{defi}[Admissible lattice]
A lattice $\Gamma$ is called admissible if
$$\mathrm{Nm} (\Gamma) := \inf_{\gamma\in\Gamma\setminus\{0\}} \Big|\prod_{i=1}^d \gamma_i\Big| > 0$$
holds true.
\end{defi}
\noindent Figure \ref{fig_lattice_hc} illustrates this property. In fact, lattice points different from $0$ lie outside of a hyperbolic
cross with ``radius'' $\mathrm{Nm}(\Gamma)$.

The following lemma is essentially \cite[Lem.\ 3.1/2]{Sk94}. In the special case of a Vandermonde generator \eqref{vanderm} we refer to \cite[Lem.\ 2.1]{UU15}.

\begin{lem}\label{dual:adm} If a lattice $\Gamma \subset \R$ is admissible then $\Gamma^\bot \subset \R$ is also admissible.
 \end{lem}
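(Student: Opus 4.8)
The plan is to prove Lemma~\ref{dual:adm} by relating the norm form $\mathrm{Nm}(\Gamma^\bot)$ of the dual lattice to $\mathrm{Nm}(\Gamma)$ through the generating matrices. Recall that if $A$ is a generating matrix for $\Gamma$, then $A^{-\top}$ generates $\Gamma^\bot$. The key observation is that for a generic lattice there is no reason for the dual to inherit admissibility, so one must use the special structure coming from the algebraic construction; the cleanest route is to show that the product of coordinates of a dual lattice vector is, up to a fixed nonzero constant, again a norm form on the same number field, so that its infimum over nonzero points is controlled.

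First I would set up the algebraic picture behind an admissible Vandermonde lattice: when $A$ is the Vandermonde matrix in the roots $\xi_1,\dots,\xi_d$ of an irreducible polynomial $P$ over $\qe$, a lattice vector $Ak$ with $k=(k_0,\dots,k_{d-1})\in\Z$ has $i$-th coordinate $\sum_{j} k_j \xi_i^{j}$, i.e. the value $g(\xi_i)$ of the integer polynomial $g(x)=\sum_j k_j x^j$ evaluated at the conjugate $\xi_i$. Hence $\prod_{i=1}^d (Ak)_i = \prod_{i=1}^d g(\xi_i) = \mathrm{Nm}_{\qe(\xi)/\qe}(g(\xi))$, which is a nonzero rational integer whenever $g(\xi)\neq 0$; this is exactly why $\mathrm{Nm}(\Gamma)\ge 1>0$. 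I would then carry out the analogous computation for $\Gamma^\bot$. The crucial point is that $A^{-\top}$ differs from a Vandermonde matrix in the same roots only by multiplication by a fixed matrix depending on $P$ (the inverse transpose of the Vandermonde is expressible through the elementary symmetric functions / the companion structure of $P$), so that the coordinates of a dual vector are again, up to a common algebraic factor, polynomial evaluations at the $\xi_i$. Taking the product over $i$ again yields a field norm times a fixed nonzero constant.

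The main step is therefore to show $\mathrm{Nm}(\Gamma^\bot)\ge c\cdot\inf |\mathrm{Nm}_{\qe(\xi)/\qe}(\cdot)|$ for a nonzero constant $c$ determined by $\det A$ and the discriminant of $P$, and then to invoke the fact that a nonzero element of the ring of integers (or a suitable fractional ideal) has field norm bounded away from zero. Concretely, since every nonzero dual lattice vector corresponds to a nonzero algebraic integer (or element of a fixed fractional ideal) in $\qe(\xi)$, its norm is a nonzero rational number whose denominator is bounded, so $|\prod_i \gamma_i|$ stays above a positive constant. I would phrase this abstractly following Skriganov \cite{Sk94}: admissibility is equivalent to the lattice being (a scaling of) the image of the ring of integers of a totally real field under the canonical embedding, and the dual of such a lattice is again such an image up to scaling by the (nonzero) codifferent, which preserves the property \eqref{adm}.

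The hard part will be making the passage from $A$ to $A^{-\top}$ precise enough to see that no coordinate-product can be made arbitrarily small, i.e. verifying that the dual generator still produces coordinates that are conjugate evaluations of a single algebraic element rather than arbitrary real linear combinations. In the general (non-Vandermonde) setting this is precisely the content of \cite[Lem.\ 3.1/2]{Sk94}, where admissibility is characterized intrinsically through the number field and is manifestly self-dual up to the codifferent ideal; I would simply reduce to that characterization and quote it, since reproving the full algebraic-number-theoretic machinery would be disproportionate. For the special Vandermonde case I would instead cite the explicit computation in \cite[Lem.\ 2.1]{UU15}, which already identifies the dual lattice norm form with a field norm and thereby gives the positive lower bound directly.
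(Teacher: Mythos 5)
Your argument has a genuine gap: the lemma is stated for an \emph{arbitrary} admissible lattice $\Gamma\subset\R$, while your proof only covers lattices of algebraic (Vandermonde/number-field) type. The bridge you propose --- that ``admissibility is equivalent to the lattice being (a scaling of) the image of the ring of integers of a totally real field under the canonical embedding'' --- is not a theorem you can invoke. In dimension $d=2$ it is false: for any two distinct badly approximable numbers $\alpha,\beta$ the lattice $\{(m+n\alpha,\,m+n\beta):m,n\in\zz\}$ is admissible, and there are uncountably many such lattices, far more than the (essentially countable) family coming from real quadratic fields. For $d\ge 3$ the claimed equivalence is precisely the open Cassels--Swinnerton-Dyer/Margulis conjecture. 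Moreover, this is not what \cite[Lem.\ 3.1/3.2]{Sk94} asserts: Skriganov's lemmas are transference-type inequalities in the geometry of numbers, giving a bound of the form $\mathrm{Nm}(\Gamma^\bot)\ge c\bigl(d,\det\Gamma,\mathrm{Nm}(\Gamma)\bigr)>0$ with no number field anywhere in the statement or proof. So even your fallback of ``quoting the characterization'' fails, because the cited result does not contain that characterization.

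For comparison: the paper itself offers no proof at all --- it states the lemma and cites \cite[Lem.\ 3.1/2]{Sk94} for the general case and \cite[Lem.\ 2.1]{UU15} for the Vandermonde case. Your norm-form computation (coordinates of $Ak$ are the conjugates $g(\xi_i)$ of $g(\xi)$, the coordinate product is the field norm, and the dual corresponds to a fractional ideal with norms of bounded denominator) is essentially correct and is exactly the content of \cite[Lem.\ 2.1]{UU15}; it suffices for the Chebyshev--Frolov lattices used later in the paper, but it does not prove the lemma as stated. The correct general argument is purely geometric: admissibility and $\mathrm{Nm}$ are invariant under diagonal matrices $D$ with $|\det D|=1$; by the inequality $\max_i|x_i|\ge\bigl(\prod_i|x_i|\bigr)^{1/d}$ every lattice in the orbit $\{D\Gamma\}$ has first minimum at least $\mathrm{Nm}(\Gamma)^{1/d}$, so by Mahler's compactness theorem the orbit closure is compact in the space of lattices; duality is continuous and carries this orbit onto $\{D^{-1}\Gamma^\bot\}$, whose first minima are therefore uniformly bounded below by some $c>0$; finally, for a nonzero $y\in\Gamma^\bot$, choosing $D$ to equalize the moduli of the coordinates of $y$ (a limiting choice of $D$ excludes vanishing coordinates) yields $\prod_i|y_i|\ge c^d>0$. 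Note also that your opening remark that ``for a generic lattice there is no reason for the dual to inherit admissibility'' is exactly backwards: the implication holds for every admissible lattice by this compactness argument, and that generality is what the lemma asserts and what the application to Frolov's method requires.
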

 
\noindent There is a generic way to construct an admissible lattice described in Temlyakov \cite[IV.4]{Tem93}. 
For a polynomial $P(x)\in\zz [x]$ of order $d$ which is irreducible over $\qe$ and has 
$d$ different real roots $\xi_1, \xi_2,\ldots,\xi_d$ one can define the Vandermonde matrix 
$A=(a_{kl})_{k,l=1}^d=(\xi_k^{l-1})_{k,l=1}^d$, see \eqref{vanderm} above, which generates an admissible lattice $\Gamma$ with
$\mathrm{Nm}(\Gamma) = 1$. We will call such a generating matrix {\em Frolov matrix} since this construction has been already used by Frolov \cite{Fr76}.
Frolov originally used the construction to define the matrix $B$ which generates the dual lattice, and then $A=B^{-\top}$ 
was chosen as the lattice generator in the Frolov cubature formula. The reason is that convergence properties of the method require admissibility of the dual lattice. However, 
in \cite[Lem.\ 3.1]{Sk94} Skriganov has shown (see Lemma \ref{dual:adm} above) that if $B$ generates an admissible lattice, so does $A$, which means that both $B$ and $A$ are valid matrices for the Frolov cubature formula. A Frolov matrix with a small determinant is desirable since the Frolov cubature formula using this matrix will show (relatively) good preasymptotic behavior. The determinant of
$A=(a_{kl})_{k,l=1}^d=(\xi_k^{l-1})_{k,l=1}^d$ is given by
$$\det(A) = \prod_{k\neq l}(\xi_k-\xi_l)\,.$$
Therefore we need polynomials $P$ which additionally have accumulated roots. To find such polynomials is a challenging task, however, for certain dimensions there are results 
available which will be given in Section 3. 

Let us now consider different representations of a given lattice $\Gamma_A:=A(\Z)$ generated by $A \in \mbox{GL}_d$. This representation is not unique, because any linear automorphism $S$ on $\Z$ yields $S(\Z) = \Z$ and consequently $AS(\Z) = A(\Z)$. 
This gives rise to the question which lattice representation is favorable from the numerical point of view, cf. Figure \ref{fig_equiv_lattice}. In the special case of orthogonal lattices, the orthogonal representation stands out obviously.

\begin{figure}[t]
\centering
\includegraphics[width=0.48\linewidth]{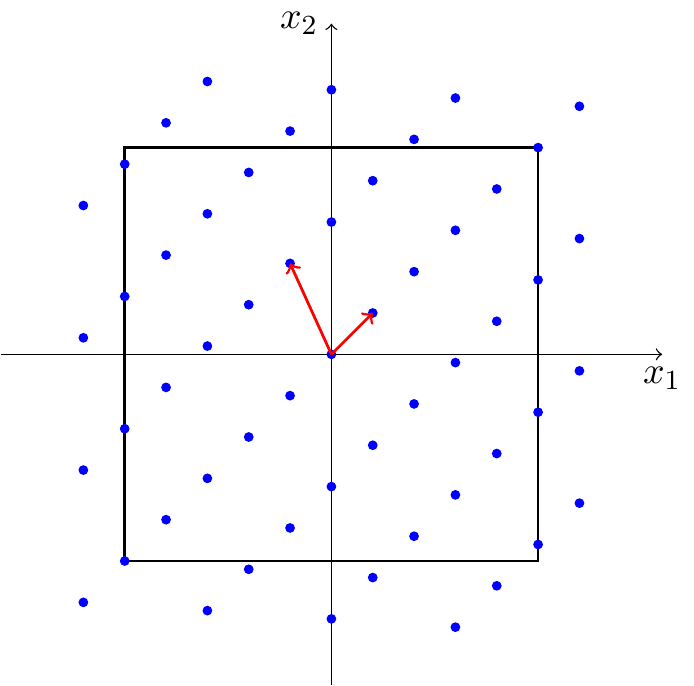}
\includegraphics[width=0.48\linewidth]{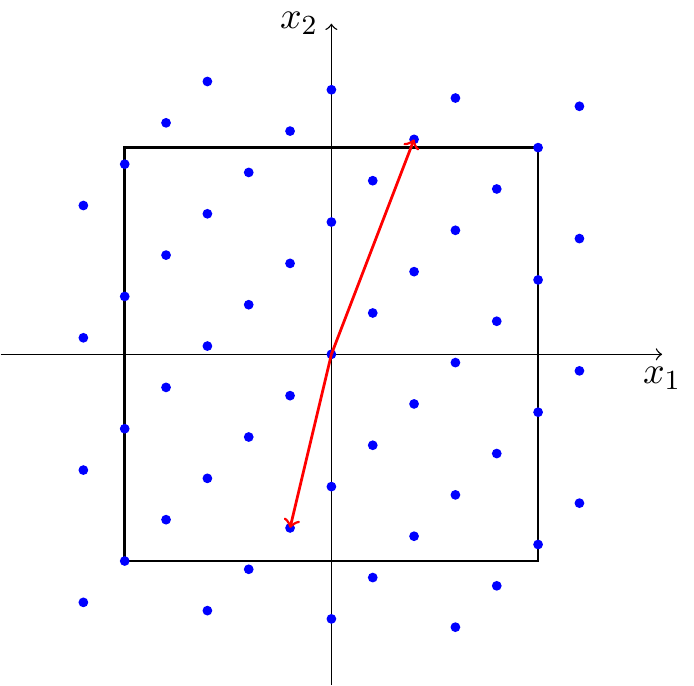}
\caption{Equivalent lattice representations within the unit cube $\Omega=\left[-1/2, 1/2\right]^2$.} \label{fig_equiv_lattice}
\end{figure}

\begin{defi}[Orthogonal lattice]
A lattice $\Gamma$ is called orthogonal if there exists a generating matrix $A\in \mbox{\rm GL}_d(\re)$ which has orthogonal column
vectors.
\end{defi}

\noindent In general, the computation of an orthogonal representation for an orthogonal lattice is 
performed by a discrete variant of the Gram-Schmidt method, 
e.g. the Lenstra-Lenstra-Lov\'{a}sz--lattice basis reduction algorithm (LLL), 
see \cite{Lenstra1982} or its modifications. 
However, as it turns out, in the case of Chebyshev-lattices an orthogonal basis can be 
determined a priori without any additional computational effort as we will show in the following Section. 

\section{Orthogonality of Chebyshev lattices} \label{sec_main}
Let $d\in\mathbb{N}$ and consider the Vandermonde matrix $T=(\xi_k^{l-1})_{k,l=1}^d$,
where 
$$\xi_k = 2\cos\left(\pi\frac{2k-1}{2d}\right)\quad,\quad k=1,\ldots,d\,,$$
represent the roots of $Q_d(x) = 2T_d(\frac{x}{2})\in\mathbb{Z}[x]$ and $T_d$ denotes the $d$-th Chebyshev polynomial. The lattice
$\Gamma_T = T(\Z)$ will be called {\em Chebyshev lattice}, and it is admissible if and only if $d=2^m$, see \cite{Tem93}, 
in which case we will call it {\em Chebyshev-Frolov lattice}. In fact, it is easy to show that for $d\neq 2^m$ the polynomial $Q_d(x)$ has a divisor which itself is a 
scaled Chebyshev polynomial $Q_{d'}(x)$ of lower order $d'=2^{m'}$ for some $m'\in\mathbb{N}$.

Our main result reads as follows. 
\begin{satz}
The Chebyshev lattice $\Gamma_T = T(\mathbb{Z}^d)$ is an orthogonal lattice.
\end{satz}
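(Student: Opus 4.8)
The plan is to exhibit an explicit unimodular change of basis that turns the monomial Vandermonde matrix $T$ into a matrix with pairwise orthogonal columns. Write $G := T^{\top}T$; its $(l,l')$-entry is the discrete bilinear form $\sum_{k=1}^d \xi_k^{l-1}\xi_k^{l'-1} = \langle x^{l-1}, x^{l'-1}\rangle$, where $\langle f,g\rangle := \sum_{k=1}^d f(\xi_k)g(\xi_k)$. Orthogonality of $\Gamma_T$ amounts to finding $S\in \mathrm{SL}_d(\zz)$ with $S^{\top}G S$ diagonal, and since $G$ is the Gram matrix of the monomials $1,x,\dots,x^{d-1}$ for $\langle\cdot,\cdot\rangle$, the natural candidate is the basis change replacing the monomials by the rescaled Chebyshev polynomials, which are \emph{discretely orthogonal} at the nodes $\xi_k$.

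Concretely, I would set $\tilde T := TS$ to be the matrix whose $l$-th column is $(p_{l-1}(\xi_k))_{k=1}^d$, where $p_0 := 1$ and $p_i := Q_i$ for $1\le i\le d-1$. First I would verify that $S\in\mathrm{SL}_d(\zz)$: each $Q_i(x)=2T_i(x/2)$ is a monic polynomial of degree $i$ with integer coefficients (from the recurrence $Q_{i+1}=xQ_i-Q_{i-1}$, $Q_0=2$, $Q_1=x$), and $p_0=1$ is monic of degree $0$. Hence expressing $p_0,\dots,p_{d-1}$ in the monomial basis gives an upper-triangular integer matrix $S$ with unit diagonal, so $\det S=1$ and $\tilde T=TS$ generates the same lattice $\Gamma_T$. (Note it is essential to take $p_0=1$ rather than $Q_0=2$, since the latter would force $\det S=2$.)

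The core of the argument is to show that the columns of $\tilde T$ are orthogonal. Using $Q_i(\xi_k)=2T_i(\cos\theta_k)=2\cos(i\theta_k)$ with $\theta_k:=\pi(2k-1)/(2d)$, the product-to-sum identity reduces every entry of $\tilde T^{\top}\tilde T$ to the elementary sum $\Sigma(m):=\sum_{k=1}^d \cos(m\theta_k)$. I would evaluate $\Sigma(m)$ as the real part of the geometric sum $\sum_{k=1}^d e^{im\theta_k}$: for $m\not\equiv 0 \pmod{2d}$ one finds $\Sigma(m)=0$ (the even case because the numerator $e^{im\pi}-1$ vanishes, the odd case because the resulting quotient is purely imaginary), while for $m\equiv 0\pmod{2d}$ one gets $\Sigma(m)=\pm d$. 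Substituting this into the three cases $i=j=0$, $i=j\ne 0$, and $i\ne j$ (with $0\le i,j\le d-1$, so that the relevant indices $i\pm j$ and $2i$ never hit a nonzero multiple of $2d$) yields $\tilde T^{\top}\tilde T=\mathrm{diag}(d,2d,\dots,2d)$, which proves that $\Gamma_T$ is orthogonal.

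The only genuinely delicate point is the evaluation of $\Sigma(m)$ together with the bookkeeping of which index combinations can land in the exceptional residue class $m\equiv 0\pmod{2d}$; everything else is a routine triangular change of basis. As a by-product the same computation gives $\tilde T_{k,l}=p_{l-1}(\xi_k)\in[-2,2]$ and the exact diagonal Gram matrix, which is precisely the strengthening recorded in Theorem \ref{main}.
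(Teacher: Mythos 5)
Your proposal is correct and follows essentially the same route as the paper: the same lattice representation $\tilde T = TS$ whose columns are $1, Q_1(\xi_k),\ldots,Q_{d-1}(\xi_k)$, with unimodularity of $S$ coming from the triangular integer change of basis, and the same evaluation of the Gram matrix via exponential/geometric sums giving $\tilde T^{\top}\tilde T = \mathrm{diag}(d,2d,\ldots,2d)$. The only cosmetic difference is that you certify $S\in\mathrm{SL}_d(\mathbb{Z})$ by monicity and integrality of the $Q_i$ (via the recurrence), while the paper's Lemma 3.2 expands the monomials $\eta_1^l$ in the cosine basis by Euler's formula --- two equivalent ways of exhibiting the same triangular unimodular transition matrix.
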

To show this, we will derive a lattice representation matrix $\tilde{T} = TS$, $S\in \mbox{SL}_d(\mathbb{Z})$ and show that it has
orthogonal column vectors.
\begin{lem}
For $\omega\in\mathbb{R}$ and $l\in \N$ define $\eta_l=2\cos(l\omega\pi)$. Then 
$$\eta_1^l-\eta_l\in\mathbb{Z}[\eta_1, \dots, \eta_{l-1}]\quad,\quad l\in \N.$$
More precisely, there exist integers $m^{(l)}_j\in\mathbb{Z}$ independent of $\omega$ such that for any $l\in \N$
$$\eta_1^l-\eta_l = m_0 + \sum_{j=1}^{l-1}m^{(l)}_j\eta_j\,.$$
\end{lem}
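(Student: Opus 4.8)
The plan is to recognise that $\eta_l=2\cos(l\omega\pi)$ is exactly the rescaled Chebyshev polynomial $Q_l$ evaluated at $\eta_1$, and that the only tool needed is the expansion of a power of $2\cos\theta$ back into cosines. Writing $\theta=\omega\pi$, De Moivre's formula gives the single identity
\[
\eta_1^l=(2\cos\theta)^l=(e^{i\theta}+e^{-i\theta})^l=\sum_{k=0}^l\binom{l}{k}e^{i(l-2k)\theta}.
\]
Pairing the summand $k$ with the summand $l-k$ turns the two conjugate exponentials into a single cosine, because $\binom{l}{k}=\binom{l}{l-k}$ and $e^{i(l-2k)\theta}+e^{-i(l-2k)\theta}=\eta_{l-2k}$. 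The term $k=0$ is unpaired with coefficient $\binom{l}{0}=1$ and reproduces $\eta_l$, while for even $l$ the central term $k=l/2$ is also unpaired and contributes the constant $\binom{l}{l/2}$.

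First I would record the collapsed form explicitly,
\[
\eta_1^l=\eta_l+\sum_{1\le k<l/2}\binom{l}{k}\,\eta_{l-2k}+\delta_l,
\]
where $\delta_l=\binom{l}{l/2}$ if $l$ is even and $\delta_l=0$ otherwise. Subtracting $\eta_l$ yields
\[
\eta_1^l-\eta_l=\delta_l+\sum_{1\le k<l/2}\binom{l}{k}\,\eta_{l-2k}.
\]
Every index occurring on the right satisfies $0<l-2k<l$, so this is precisely of the required shape $m_0+\sum_{j=1}^{l-1}m_j^{(l)}\eta_j$ with $m_0=\delta_l$ and $m_j^{(l)}=\binom{l}{k}$ whenever $j=l-2k$ for some $k\ge 1$ (and $m_j^{(l)}=0$ otherwise). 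These coefficients are binomial, hence integers; and --- the key point regarding the required independence of $\omega$ --- the displayed expansion is a trigonometric identity valid for every real $\theta$, so the $m_j^{(l)}$ are absolute constants that do not depend on $\omega$.

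I expect no genuine obstacle: the only care needed is the bookkeeping of the unpaired central term (the constant $\delta_l$, equivalently the appearance of $\eta_0=2$) and the verification that no index on the right ever reaches $l$, both of which are immediate from the expansion above. As an alternative, one could argue by induction on $l$ using only the three-term recurrence $\eta_{l+1}=\eta_1\eta_l-\eta_{l-1}$ together with the product-to-sum rule $\eta_1\eta_j=\eta_{j+1}+\eta_{j-1}$ (read with $\eta_{-1}=\eta_1$, $\eta_0=2$): writing $\eta_1^l=\eta_l+(\text{lower-index integer combination})$ and multiplying by $\eta_1$, the only top contribution is $\eta_1\eta_l=\eta_{l+1}+\eta_{l-1}$, so after subtracting $\eta_{l+1}$ all remaining indices stay $\le l=(l+1)-1$, which closes the induction. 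Either route makes clear that the real content of the lemma is simply that integer powers of $2\cos\theta$ re-expand into integer-coefficient combinations of the values $2\cos(j\theta)$.
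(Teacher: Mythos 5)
Your proposal is correct and follows essentially the same route as the paper: both expand $\eta_1^l=(e^{i\omega\pi}+e^{-i\omega\pi})^l$ by the binomial theorem, cancel the extreme terms against $\eta_l$, and pair conjugate exponentials into the cosines $\eta_{l-2k}$, with the unpaired central binomial coefficient $\binom{l}{l/2}$ supplying the constant $m_0$ when $l$ is even. Your sketched alternative via the recurrence $\eta_1\eta_j=\eta_{j+1}+\eta_{j-1}$ is a valid induction the paper does not pursue, but your primary argument is the paper's proof in all but notation.
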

\begin{proof}

The proof is a straightforward calculation using Euler's formula by putting 

\begin{eqnarray*}
\eta_1^l-\eta_l&=&\left(e^{\omega\pi \mathrm{i}}+e^{-\omega\pi \mathrm{i}}\right)^l-\left(e^{\omega\pi \mathrm{i} l}+e^{-\omega\pi \mathrm{i} l}\right)\\
&=&\displaystyle\sum_{j=0}^l\binom{l}{j}e^{\omega\pi \mathrm{i}(l-2j)}-\left(e^{\omega\pi \mathrm{i} l}+e^{-\omega\pi \mathrm{i} l}\right)\\
&=&\displaystyle\sum_{j=1}^{l-1}\binom{l}{j}e^{\omega\pi \mathrm{i} (l-2j)}\\
&=&\begin{cases}
	\displaystyle\sum_{j=1}^{\frac{l-1}{2}}\binom{l}{j}\left(e^{\omega\pi \mathrm{i} (l-2j)}+e^{-\omega\pi \mathrm{i} (l-2j)}\right)&:
    l \text{ odd}\,,\\
    \displaystyle\sum_{j=1}^{\lfloor\frac{l-1}{2}\rfloor}\binom{l}{j}\left(e^{\omega\pi \mathrm{i} (l-2j)}+e^{-\omega\pi \mathrm{i} (l-2j)}\right)+\binom{l}{\frac{l}{2}}e^{\omega\pi \mathrm{i} (l-l)}&:
    l \text{ even}
\end{cases}\\
&=&\begin{cases}
	\displaystyle\sum_{j=1}^{\frac{l-1}{2}}\binom{l}{j}2\cos(\omega\pi(l-2j)) & : 
    l \text{ odd}\,,\\
    \displaystyle\sum_{j=1}^{\lfloor\frac{l-1}{2}\rfloor}\binom{l}{j}2\cos(\omega\pi(l-2j))+\binom{l}{\frac{l}{2}}&: 
    l \text{ even}\,.
\end{cases}
\end{eqnarray*}
The values $m_j^{(l)}$ can be obtained from this representation.
\end{proof}
This lemma leads to our desired lattice representation, since multiplying with a matrix $S\in \mbox{SL}_d(\mathbb{Z})$
from the right is a composition of column operations.

\begin{cor} \label{cor_matrix}
The matrix $\tilde{T} = TS$, where $S\in\ \mbox{SL}_d(\mathbb{Z})$ is a suitable column operation matrix, given by
$$\tilde{T}_{kl} = 
\begin{cases}
	1&:
    l=1\,,\\
	2\cos\left(\pi (l-1)\frac{2k-1}{2d}\right)&:
    l=2\dots d
\end{cases} $$
generates the lattice $\Gamma_T = T(\mathbb{Z}^d)$.
\end{cor}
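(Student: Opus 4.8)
The statement to be proved is that the explicitly given matrix $\tilde{T}$ arises from $T$ by right-multiplication with some $S \in \mathrm{SL}_d(\mathbb{Z})$; as noted just before the statement, such a multiplication is a composition of integer column operations, and since $S(\mathbb{Z}^d)=\mathbb{Z}^d$ it leaves the lattice invariant, so that $\tilde{T}(\mathbb{Z}^d)=TS(\mathbb{Z}^d)=T(\mathbb{Z}^d)=\Gamma_T$. Hence the whole task reduces to \emph{constructing} this unimodular $S$ and checking that its entries are integers with $\det S = 1$.

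The plan is to read off $S$ column by column from the Lemma. Writing $\omega_k := (2k-1)/(2d)$ so that $\xi_k = 2\cos(\omega_k\pi)$, and $\eta_l := 2\cos(l\omega_k\pi)$ as in the Lemma (applied with $\omega=\omega_k$), the $l$-th column of $T$ is $(\xi_k^{l-1})_{k=1}^d=(\eta_1^{l-1})_k$, while the $l$-th column of $\tilde{T}$ is the all-ones vector for $l=1$ and $(\eta_{l-1})_k$ for $l\ge 2$. The Lemma states $\eta_1^l-\eta_l=m_0+\sum_{j=1}^{l-1}m_j^{(l)}\eta_j$ with integers $m_j^{(l)}$ independent of $\omega$; equivalently, unrolling this recursion, $\eta_l=Q_l(\eta_1)$ where $Q_l$ is the scaled Chebyshev polynomial from \eqref{Cheb}, which is monic of degree $l$ and lies in $\mathbb{Z}[x]$. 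Sampling $\eta_l=Q_l(\xi_k)=\xi_k^l+\sum_{i=0}^{l-1}q_i^{(l)}\xi_k^i$ at every root therefore expresses the $(l+1)$-th column of $\tilde{T}$ as the $(l+1)$-th column of $T$ plus an integer combination of the columns $1,\dots,l$ of $T$, the constant term being carried by the all-ones first column. Collecting these coefficients gives the matrix $S$ with $\tilde{T}=TS$.

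It remains to verify $S\in\mathrm{SL}_d(\mathbb{Z})$, which is the only step needing care and the main (if mild) obstacle. By construction $S$ is upper triangular, because the $m$-th column of $\tilde{T}$ is a polynomial of degree $m-1$ in $\xi_k$ and hence involves only the first $m$ power-basis columns of $T$; its entries are integers since the $Q_l$ lie in $\mathbb{Z}[x]$ and the constant terms are absorbed as integer multiples of the all-ones column; and its diagonal entries all equal $1$, since each $Q_l$ is monic and the first column is left unchanged. Consequently $\det S=1$, so $S\in\mathrm{SL}_d(\mathbb{Z})$ and the corollary follows. The essential point is thus the monicity and integrality of the Chebyshev polynomials supplied by the Lemma; without unit leading coefficients one would only obtain $\det S\neq 0$, which would not suffice to conclude that $\tilde{T}$ generates the \emph{same} lattice.
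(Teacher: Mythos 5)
Your proof is correct, and it rests on the same key fact (Lemma 3.2) as the paper's, but you assemble the unimodular matrix $S$ differently. The paper builds $S$ as a product $S = S^{(3)}\cdots S^{(d)}$ of elementary column-operation matrices, each $S^{(l)}$ subtracting the lemma's integer coefficients $m_j^{(l)}$ times the \emph{already transformed} columns from the $l$-th column; the unimodularity of $S$ is then inherited from the obvious unimodularity of each factor, at the price of some implicit bookkeeping about the order in which the operations are applied. You instead unroll the lemma's recursion into the statement that $\eta_l = Q_l(\eta_1)$ with $Q_l \in \mathbb{Z}[x]$ monic of degree $l$, which lets you write every column of $\tilde{T}$ directly as an integer combination of the \emph{original} power-basis columns of $T$; this produces $S$ in one shot as an upper triangular integer matrix with unit diagonal, from which $\det S = 1$ is immediate. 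In effect you exhibit $S$ as the (unimodular) change-of-basis matrix between the monomial basis and the Chebyshev basis $\{1, Q_1, \dots, Q_{d-1}\}$, whereas the paper exhibits its factorization into elementary matrices. Your version makes the determinant computation and the triangular structure fully explicit and avoids the sequential-column subtlety; the paper's version is more algorithmic and closer to how one would implement the reduction. Your closing remark, that monicity (unit leading coefficients) is exactly what upgrades $\det S \neq 0$ to unimodularity and hence to equality of the lattices, correctly isolates the essential point.
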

\begin{proof}
The case $d=2$ is trivial, so assume $d>2$. For $i=3\dots d$ we define $S^{(l)}\in \mbox{SL}_d(\zz)$ to be a column operation matrix changing the $l$-th column:
\begin{equation}\label{Sl}
    S^{(l)} = \left(\begin{array}{cccccc}
                1 &  &  & -m_0^{(l)} &  & \\
                  & \ddots &  & \vdots & & \\
                  &  & \ddots & -m_{l-2}^{(l)} & & \\
                  &  &  & 1& & \\
									&  &  &  & \ddots & \\
									&  &  &  &  & 1
              \end{array}\right)\,
\end{equation}
Then the product matrix $S = S^{(3)}\cdots S^{(d)}$ consecutively transforms the entries of $T$ which have the
form $\xi_k^{l-1}=2\cos(\pi\frac{2k-1}{2d})^{l-1}$ according to Lemma 3.2.
\end{proof}
We remark that this formula is applicable in general to any Vandermonde lattice with generating factors ranging from
$-2$ to $2$. Furthermore, $\tilde{T}$ has better stability properties than $T$. The following lemma will complete the proof of
Theorem \ref{main}.
\begin{lem} The matrix $\tilde{T}$ is orthogonal. Moreover, it holds $\tilde{T}^{\top}\tilde{T} = \mathrm{diag}(d,2d,...,2d)$\,.
\end{lem}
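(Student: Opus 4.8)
The plan is to read off the entries of $\tilde{T}^{\top}\tilde{T}$ directly as Euclidean inner products of the columns of $\tilde{T}$ and to evaluate them via a discrete orthogonality of cosines on the Chebyshev nodes. Writing $\theta_k := \pi\frac{2k-1}{2d}$ so that $\tilde{T}_{k1}=1$ and $\tilde{T}_{kl}=2\cos((l-1)\theta_k)$ for $l\ge 2$, the $(l,l')$-entry of $\tilde{T}^{\top}\tilde{T}$ is $\sum_{k=1}^d \tilde{T}_{kl}\tilde{T}_{kl'}$. I would split into three cases: the $(1,1)$-entry is trivially $\sum_{k=1}^d 1=d$; each $(1,l')$-entry with $l'\ge 2$ equals $2\sum_{k=1}^d\cos((l'-1)\theta_k)$; and each $(l,l')$-entry with $l,l'\ge 2$ equals $4\sum_{k=1}^d\cos((l-1)\theta_k)\cos((l'-1)\theta_k)$.

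The arithmetic core is the single identity
$$\sum_{k=1}^d\cos(m\theta_k)=0\qquad\text{for every integer } m \text{ with } 1\le m\le 2d-1.$$
I would prove this by Euler's formula in the spirit of Lemma 3.2: rewriting $\cos(m\theta_k)=\mathrm{Re}\,e^{im\theta_k}$ and factoring out $e^{im\pi/(2d)}$ turns the sum into a finite geometric series $\sum_{k=0}^{d-1}(e^{im\pi/d})^k$ with ratio $w=e^{im\pi/d}$. Since $1\le m\le 2d-1$ forces $w\neq 1$, the series equals $((-1)^m-1)/(w-1)$. For even $m$ the numerator vanishes; for odd $m$ a short simplification using $w-1=e^{im\pi/(2d)}\,2i\sin(m\pi/(2d))$ shows the whole expression is purely imaginary, so taking real parts gives $0$ in both cases. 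With this identity the $(1,l')$-entries vanish immediately, as $1\le l'-1\le d-1$.

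For the remaining entries I would apply the product-to-sum formula $2\cos(i\theta_k)\cos(j\theta_k)=\cos((i+j)\theta_k)+\cos((i-j)\theta_k)$ with $i=l-1,\ j=l'-1\in\{1,\dots,d-1\}$. The difference term $\sum_k\cos((i-j)\theta_k)$ equals $d$ when $i=j$ (the summand is $1$) and $0$ otherwise by the identity applied to $1\le|i-j|\le d-2$. Hence the diagonal entries for $l\ge 2$ come out as $4\cdot\tfrac12 d=2d$ and the off-diagonal ones vanish, giving $\tilde{T}^{\top}\tilde{T}=\mathrm{diag}(d,2d,\dots,2d)$; diagonality of this Gram matrix is precisely the asserted orthogonality of the columns of $\tilde{T}$.

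The step needing the most care — and the reason I extended the range of the identity all the way to $2d-1$ rather than stopping at $d-1$ — is the sum term $\sum_k\cos((i+j)\theta_k)$: here the frequency $i+j$ can be as large as $2d-2$, so one must rule out that it lands on a nonzero multiple of $2d$, which would resurrect a nonzero contribution of size $d$ (there $w=1$ and every summand equals $\pm1$). Since $2\le i+j\le 2d-2<2d$, the frequency stays strictly below $2d$, so $w\neq 1$ and the identity applies, forcing this term to vanish. This is the only place where the sharp bounds on the indices are used, and it is exactly what makes the off-diagonal cancellation go through.
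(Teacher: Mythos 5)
Your proof is correct and follows essentially the same route as the paper: both evaluate the Gram matrix entries via Euler's formula and a geometric series at the Chebyshev angles $\theta_k=\pi\frac{2k-1}{2d}$, with the same case split (first column, diagonal, off-diagonal) and the same crucial observation that all frequencies stay strictly between $0$ and $2d$. The only cosmetic difference is that you isolate the vanishing-sum identity $\sum_{k=1}^d\cos(m\theta_k)=0$, $1\le m\le 2d-1$, as a standalone lemma and use the product-to-sum formula, whereas the paper inlines the same computation by doubling each cosine sum into a $2d$-term exponential sum.
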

\begin{proof}
For $l=2\dots d$ we have
\begin{eqnarray*}
((\tilde{T})^{\top}\tilde{T})_{1l} &=& \displaystyle\sum_{k=1}^d 2\cos\left(\pi (l-1)\frac{2k-1}{2d}\right)\\
&=& \displaystyle\sum_{k=1}^d \left(e^{\mathrm{i} \pi (l-1)\frac{2k-1}{2d}}+e^{-\mathrm{i} \pi (l-1)\frac{2k-1}{2d}}\right)\\
&=& \displaystyle\sum_{k=1}^{2d}e^{\mathrm{i} \pi (l-1)\frac{2k-1}{2d}}\,. \\
\end{eqnarray*}
We continue observing 
\begin{equation*}
	\sum_{k=1}^{2d}e^{2 \pi \mathrm{i} (l-1)\frac{2k-1}{4d}} = \left(\displaystyle\sum_{k=1}^{2d}e^{2 \pi \mathrm{i} (l-1)\frac{k}{2d}}\right)e^{\frac{{-2 \pi \mathrm{i} (l-1)}}{4d}} = \frac{1-e^{2\pi \mathrm{i}(l-1)}}{1-e^{2 \pi \mathrm{i} \frac{(l-1)}{2d}}}e^{\frac{{-2 \pi \mathrm{i} (l-1)}}{4d}} = 0 \,.
\end{equation*}
Let us now consider $l=2\dots d$ and $j=2\dots d$. We find
\begin{eqnarray*}
((\tilde{T})^{\top}\tilde{T})_{jl} &=& \displaystyle\sum_{k=1}^d 2\cos\left(\pi (j-1)\frac{2k-1}{2d}\right)2\cos\left(\pi
(l-1)\frac{2k-1}{2d}\right)\\
&=&\displaystyle\sum_{k=1}^d\left(e^{ \pi \mathrm{i} (j-1)\frac{2k-1}{2d}}+e^{- \pi \mathrm{i} (j-1)\frac{2k-1}{2d}}\right) \left(e^{\pi \mathrm{i}
(l-1)\frac{2k-1}{2d}}+e^{-\pi \mathrm{i} (l-1)\frac{2k-1}{2d}}\right)\\
&=&\displaystyle\sum_{k=1}^d e^{\pi \mathrm{i} (j+l-2)\frac{2k-1}{2d}}+e^{\pi \mathrm{i} (j-l)\frac{2k-1}{2d}}+e^{\pi \mathrm{i}
(l-j)\frac{2k-1}{2d}}+e^{-\pi \mathrm{i} (j+l-2)\frac{2k-1}{2d}}\\
&=&\displaystyle\sum_{k=1}^{2d}e^{2\pi \mathrm{i} (j+l-2)\frac{2k-1}{4d}} + \sum_{k=1}^{2d}e^{2\pi \mathrm{i} (j-l)\frac{2k-1}{4d}}
=
\begin{cases}
	2d&: j=l\\
	0 &:\text{otherwise}\,.
\end{cases}
\end{eqnarray*}
\end{proof}

\section{The Frolov cubature formula}
We return to the Frolov cubature formula \eqref{Fr} mentioned in the introduction, see \cite{Fr76, Sk94, Tem93,
Tem03, UU15}, to estimate integrals of the form
$$
	I(f):=\int_{\Omega} f(x)\,\mathrm{d}x\,,
$$
where $\Omega \subset \R$ is a compact set. The matrix $T \in \mathbb{R}^{d \times d}$ is chosen such that $T(\Z)$ is an admissible lattice, for
instance the Chebyshev-Frolov matrix from above. 
For a given scaling parameter $n \in \N$ we define the matrix
\begin{equation}
   \mathcal{T}_n = (n \det(T))^{-\frac{1}{d}}T ,
\end{equation}
which satisfies $\det(\mathcal{T}_n )=1/n$. Defining $\Gamma_n = \mathcal{T}_n(\mathbb{Z}^d)$, the integration nodes are chosen as the
elements of the lattice $\Gamma_n$ belonging to $\Omega$, i.e. $N(n) := | \Gamma_n \cap \Omega |$. Note, that the cubature 
weights of the Frolov method are chosen to be uniformly $1/n$.
But, despite the uniformity of the weights, the Frolov cubature formula does not represent a Quasi--Monte Carlo
method since in general $N(n) \neq n$, i.e., the weights do not sum up to one. However,
 we have that $\lim_{n \to \infty } \tfrac{|\Gamma_n\cap\Omega|}{n} = \mathrm{vol}(\Omega)$. 

The formula \eqref{Fr} 
performs asymptotically optimal for a broad variety of function spaces with dominating mixed smoothness, see \cite{UU15}. 
To this end, we define the Besov spaces of dominating mixed smoothness as follows. 

\begin{defi}[Besov space of mixed smoothness] \label{def:besov}
Let $0 < p,\theta\le\infty$, $r > \max\{1/p-1,0\}$, and 
$(\varphi_m)_{m\in\N_0^d}$ be a tensorized decomposition of unity in the sense of \cite[Rem.\ 3.3]{DTU16}. 
The \emph{Besov space of dominating mixed smoothness} 
$\Bspt=\Bspt(\R)$ is the set of all $f\in L_1(\R)$ 
such that 
\[
\|f\|_{\Bspt} \,:=\, 
\Big(\sum_{m\in\N_0^d} 2^{ r|m|_1 \theta}\, 
	\|\cf^{-1}[\varphi_m \cf f]\|_p^\theta\Big)^{1/\theta} \,<\,\infty
\]
with the usual modification for $\theta=\infty$. 
\end{defi}
\noindent In the special case $p=\theta=2$ we put $H^r_{\text{mix}}(\R) := \Bspt$ which denotes the 
\emph{Sobolev spaces of dominating mixed smoothness} $r$. Let us restrict to the case $\Omega :=
[-1/2,1/2]^d$ in the sequel and define a subspace of $\Bspt$, namely the space of $\Bo$ of functions which are
supported in the unit cube $[-1/2,1/2]^d$, 
i.e.~we consider 
\begin{equation}\label{Ao}
\Bo \,:=\, \bigl\{ f\in \Bspt(\R)\colon \supp(f)\subset[-1/2,1/2]^d\bigr\}\,.
\end{equation}
In \cite{Du97, UU15} it has been shown in case $1\leq p,\theta\leq \infty$ and $r>1/p$ that 
\begin{equation}\label{rate}
     \sup\limits_{f\in \Bo, \|f\|_{\Bspt}\leq 1}|I(f)-\Phi(T,n;f)| \asymp n^{-r}(\log n)^{(d-1)(1-1/\theta)}\,,
\end{equation}
where the constant behind $\asymp$ depends on $d$ and the choice of $T$. Note, that the rate in \eqref{rate} is independent of 
the integrability parameter $p$. Taking into account that the number $N(n)$ of cubature nodes satisfies
\begin{equation}\label{numnodes}
      N(n):=n+O(\log^{d-1} n)\,,
\end{equation}
see \cite[(0.1)]{Sk94}, the rate of convergence \eqref{rate} is optimal among all cubature formulas with $N$ arbitrary nodes
and weights. 

\section{Enumerating the Chebyshev-Frolov nodes}

%
%
%
%
%
%

In order to generate the Frolov cubature nodes belonging to $\Omega:=[-1/2,1/2]^d$ explicitly one needs an efficient way to enumerate all points from
$\Gamma_n \cap \Omega$, which already in moderate dimensions is a difficult task.
In fact, we need to determine
\begin{equation}\label{Xn}
	X_n := \mathcal{T}_n (\mathbb{Z}^d) \cap \left[-\frac{1}{2}, \frac{1}{2} \right]^d 
\end{equation}
as efficient as possible. This is equivalent to finding the pre-image of $[-1/2,1/2]^d$ under the linear map $\mathcal{T}_n$ intersected with $\Z$, i.e.
\begin{equation}
	Y_n := \left( \mathcal{T}_n^{-1} \left[-\frac{1}{2}, \frac{1}{2} \right]^d \right) \cap \mathbb{Z}^d ,
\end{equation}
since $k \in Y_n$ if and only if $\mathcal{T}_n k \in X_n$. Now it is a natural approach to use a finite set $K_n \subset \mathbb{Z}^d$ that covers $Y_n$, i.e. $Y_n \subset K_n$, and allows for an efficient enumeration 
on a computer. Then, one can check for each vector $k \in K_n$ wether $\mathcal{T}_n k \in X_n$. 

\begin{figure}[t]
\centering
\includegraphics[width=0.48\linewidth]{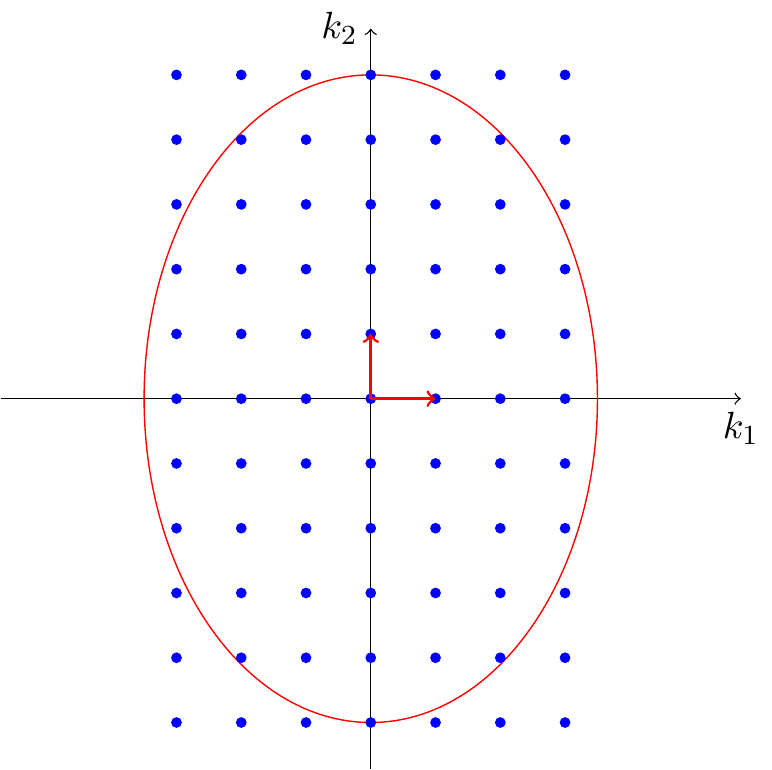}
\includegraphics[width=0.48\linewidth]{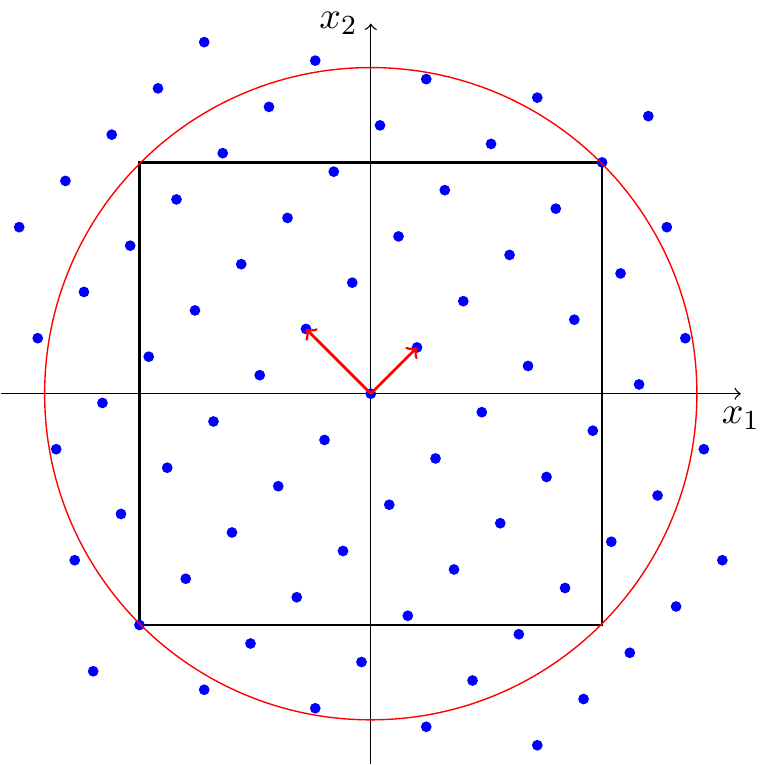}
\caption{The ellipsoid (left) that is the pre-image under $\mathcal{T}_n$ of the bounding ball of $\Omega=[-1/2,1/2]^2$ (right).}
\label{fig_integerside}

\end{figure}

However, there remains the problem of determining suitable covering sets $K_n$. To this end, we note that an efficient enumeration is possible at least for all integer vectors within $\ell_p$-ellipsoids that are axis-aligned, i.e.
\begin{equation}
	E_{p,d}(R;\mu_1,\ldots,\mu_d) := \Big\{x \in \mathbb{R}^d: \sum_{j=1}^d \Big| \frac{x_j}{\mu_j} \Big|^p \leq R^p \Big\} , 
\end{equation}
where $R\mu_1,\ldots, R\mu_d > 0$ denote the lengthes of the semi-axes. An efficient enumeration of all integer vectors belonging to such a set is possible 
due to the recursive representation of its discrete counterpart 
$\mathcal{E}_{p,d}(R;\mu_1,\ldots,\mu_d) := E_{p,d}(R;\mu_1,\ldots,\mu_d) \cap \Z$, which reads
\begin{equation}\label{capZ}
	\mathcal{E}_{p,d}(R;\mu_1,\ldots,\mu_d) = \bigcup_{|k_d| \leq R \, \mu_d} \mathcal{E}_{p,d-1}\left( \left[R^p- 
	\left( \frac{|k_d|}{\mu_d} \right)^p\right]^{\frac{1}{p}};\mu_1,\ldots,\mu_{d-1} \right) \times \{k_d\}
\end{equation}
and can easily be implemented as a $d$-fold nested for-loop. In addition, the cardinality of \eqref{capZ}, i.e., 
the number of integer vectors belonging to $E_{p,d}(R;\mu_1,\ldots,\mu_d)$ can be estimated 
following the approach in \cite[Sect.\ 3]{KuMaUl15}. To this end, we have the following result, 
which relates the number of integer vectors within a general $p$-ellipsoid to its volume. 
Note at this point the relation $E_{p,d}(R;1,\ldots,1) = R\cdot B_p^d$, where $B_p^d$ denotes the unit-ball with respect to the $\ell_p$-(quasi)-norm.
 
\begin{prop}\label{enum_ellipse} Let $\mu = (\mu_1,...,\mu_d)>0$ and $0<p<\infty$. \\
	{\em (i)} For the volume of the ``unit'' ellipsoid $E_{p,d}(1;\mu_1,\ldots,\mu_d)$ it holds
	$$
	      \mathrm{vol}(E_{p,d}(1;\mu_1,\ldots,\mu_d)) = \mathrm{vol}(B_p^d)\prod_{j=1}^d \mu_j = 2^d\frac{\Gamma(1+1/p)^d}{\Gamma(d/p+1)} \prod_{j=1}^d \mu_j\,.
	$$
	{\em (ii)} If $R>r(\mu,p)$ then the number of integer points in $E_{p,d}(R;\mu_1,\ldots,\mu_d)$ is bounded from above and below by
	\begin{equation*}
		 \left(R^{\varrho} - r(\mu, p)^{\varrho} \right)^{d/\varrho} \mathrm{vol}(E_{p,d}(1;\mu_1,\ldots,\mu_d)) \leq |\mathcal{E}_{p,d}(R;\mu)| 
		 \leq \left(R^{\varrho} + r(\mu, p)^{\varrho} \right)^{d/\varrho} \mathrm{vol}(E_{p,d}(1;\mu_1,\ldots,\mu_d))\,,
	\end{equation*}
	where $r(\mu, p): = 1/2 \, \left(\sum_{j=1}^d |\mu_j|^{-p} \right)^{1/p}$ and $\varrho:=\min\{p,1\}$.\\
{\rm (iii)} It holds
$$
      \lim\limits_{R \to \infty} \frac{|\mathcal{E}_{p,d}(R;\mu)|}{R^d} = \mathrm{vol}(E_{p,d}(1;\mu_1,\ldots,\mu_d))\,.
$$
\end{prop}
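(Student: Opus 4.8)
The plan is to treat the three parts in order, with part (ii) carrying essentially all of the content; parts (i) and (iii) are a change of variables and a squeeze, respectively.

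For part (i), I would observe that $E_{p,d}(1;\mu_1,\dots,\mu_d)$ is the image of the unit $\ell_p$-ball $B_p^d=E_{p,d}(1;1,\dots,1)$ under the diagonal linear map $\mathrm{diag}(\mu_1,\dots,\mu_d)$, whose Jacobian determinant is $\prod_{j=1}^d\mu_j$; hence $\mathrm{vol}(E_{p,d}(1;\mu))=\mathrm{vol}(B_p^d)\prod_{j=1}^d\mu_j$ by the change-of-variables formula. It then remains to insert the classical value $\mathrm{vol}(B_p^d)=2^d\Gamma(1+1/p)^d/\Gamma(d/p+1)$, which I would either cite or establish by induction on $d$ from Fubini slicing (the continuous analogue of \eqref{capZ}) together with a Beta-integral evaluation.

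For part (ii) the key idea is the standard tiling argument: to each $k\in\Z$ associate the unit cube $k+Q$ with $Q=[-1/2,1/2]^d$, so that these cubes partition $\R$ up to a null set and each has volume $1$. Consequently $|\mathcal{E}_{p,d}(R;\mu)|$ equals the volume of $U:=\bigcup_{k\in\mathcal{E}_{p,d}(R;\mu)}(k+Q)$, and the task reduces to sandwiching $U$ between two dilates of $E_{p,d}(1;\mu)$. Writing $\|x\|_{p,\mu}:=(\sum_j|x_j/\mu_j|^p)^{1/p}$, I would first record that every $q\in Q$ satisfies $\|q\|_{p,\mu}\le r(\mu,p)$, with equality at the corners; this is exactly where $r(\mu,p)=\tfrac12(\sum_j|\mu_j|^{-p})^{1/p}$ enters. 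The upper bound then comes from inflation (every point of $k+Q$ with $\|k\|_{p,\mu}\le R$ lies in a larger ellipsoid, so $U$ is contained in it), and the lower bound from deflation (rounding any $x$ in a slightly smaller ellipsoid to its nearest integer point $k$ keeps $\|k\|_{p,\mu}\le R$, so that smaller ellipsoid is contained in $U$).

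The hard part — and the reason the exponent $\varrho=\min\{p,1\}$ appears — is that the inflation/deflation step must be carried out differently for $p\ge 1$ and $p<1$. For $p\ge 1$ the functional $\|\cdot\|_{p,\mu}$ is a genuine norm, so the triangle inequality gives $\|k+q\|_{p,\mu}\le R+r(\mu,p)$ and $U\subset E_{p,d}(R+r;\mu)$, while rounding yields $E_{p,d}(R-r;\mu)\subset U$; here $\varrho=1$ and $(R^\varrho\pm r^\varrho)^{d/\varrho}=(R\pm r)^d$. For $0<p<1$ the triangle inequality fails, but the $p$-th power is subadditive, $\|k+q\|_{p,\mu}^p\le\|k\|_{p,\mu}^p+\|q\|_{p,\mu}^p\le R^p+r^p$, whence $U\subset E_{p,d}((R^p+r^p)^{1/p};\mu)$ and, by the same rounding, $E_{p,d}((R^p-r^p)^{1/p};\mu)\subset U$; here $\varrho=p$ and $(R^\varrho\pm r^\varrho)^{d/\varrho}=(R^p\pm r^p)^{d/p}$. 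The hypothesis $R>r(\mu,p)$ is precisely what guarantees $R^\varrho-r^\varrho>0$, so that the deflated ellipsoid is genuinely present and the lower bound is meaningful. Taking volumes and using the homogeneity $\mathrm{vol}(E_{p,d}(s;\mu))=s^d\,\mathrm{vol}(E_{p,d}(1;\mu))$ for $s>0$ then yields both inequalities simultaneously. Finally, part (iii) is immediate from (ii): dividing the two-sided bound by $R^d$ leaves $(1\pm(r/R)^\varrho)^{d/\varrho}\,\mathrm{vol}(E_{p,d}(1;\mu))$ on either side, and since $(1\pm(r/R)^\varrho)^{d/\varrho}\to 1$ as $R\to\infty$, the squeeze theorem delivers the claimed limit.
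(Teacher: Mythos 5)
Your proof is correct and takes essentially the same route as the paper: part (i) by a change of variables combined with the classical $\ell_p$-ball volume, part (ii) by tiling with the unit cubes $k+[-1/2,1/2]^d$ and sandwiching their union between two dilated ellipsoids via the $\varrho$-triangle inequality (which is exactly the argument of \cite[Sect.\ 3]{KuMaUl15} that the paper cites), and part (iii) by the squeeze from (ii). The only difference is expository: you spell out the inflation/deflation details and the case distinction $p\ge 1$ versus $p<1$ that the paper subsumes in the unified $\varrho$-triangle inequality and the reference.
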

	
\begin{proof} The formula in (i) is obtained by change of variable and the well-known formula for the volume of standard $\ell_p$-balls in $\re^d$. In fact, we have 
$$
      \int\limits_{\{x \in \re^d~:~\sum_i |x_i/\mu_i|^p \leq 1\}} 1\,\mathrm{d}x = \Big(\prod\limits_{j=1}^d \mu_j\Big) \int\limits_{|y|_p\leq 1} 1\,dy = 
      2^d\frac{\Gamma(1+1/p)^d}{\Gamma(d/p+1)}\prod\limits_{j=1}^d \mu_j\,.
$$
The limit statement in (iii) is a direct consequence of (ii). 

It remains to prove (ii). Here we use the arguments in \cite[Sect.\ 3]{KuMaUl15} and define a (quasi-)norm on $\re^d$ via
$$
    \|x\|:=\Big(\sum\limits_{i=1}^d |x_i/\mu_i|^p\Big)^{1/p}\quad,\quad x\in \re^d\,.
$$
Note, that the classical triangle inequality is replaced by the $\varrho$-triangle inequality, where $\varrho:=\min\{1,p\}$, i.e., 
$\|x+y\|^{\varrho} \leq \|x\|^{\varrho} + \|y\|^{\varrho}$ for all $x,y\in \re^d$\,. We denote with 
$B_{\|\cdot\|}$ the (closed) unit ball of $(\re^d,\|\cdot\|)$. By putting 
$Q_k:=k+[-1/2,1/2]^d$ for $k\in \zz^d$ we observe according to \cite[Sect.\ 3]{KuMaUl15} as a consequence of the $\varrho$-triangle inequality 
$$
      \ell(R,p,d)B_{\|\cdot\|}\subset \bigcup\limits_{\|k\|\leq R} Q_k \subset L(R,p,d)B_{\|\cdot\|}\,,
$$
where $\ell(R,p,d):=(R^{\varrho}-r(\mu,p)^{\varrho})^{1/\varrho}$ and $L(R,p,d):=(R^{\varrho}+r(\mu,p)^{\varrho})^{1/\varrho}$ with 
$r(\mu,p)=\|\sum_{i=1}^d e_i\|/2$\,. Taking volumes on both sides yields (ii). 
\end{proof}
\noindent Now we are in the position to exploit the orthogonality of the Chebyshev-Frolov lattice by choosing a proper bounding ellipsoid with respect to the Euclidian norm, i.e., $p=2$. 
To this end, we write $\mathcal{T}_n = (n \det(D))^{-\frac{1}{d}} Q D$ as the scaled product of an orthogonal matrix with unit determinant 
$Q \in \mathrm{SO}_d$ and a diagonal matrix $D = \mbox{diag}(\lambda_1,...,\lambda_d)$ with entries 
\begin{equation}\label{lambdas}
   \lambda_1 = \sqrt{d}\quad\mbox{and}\quad \lambda_2 = \ldots = \lambda_d = \sqrt{2 d}\,.
\end{equation}
We note that it holds $[-1/2, 1/2]^d \subset E_{2,d}(\sqrt{d} / 2;1,\ldots,1)= (\sqrt{d}/2)B_2^d$, the isotropic ball in $\R$ of radius $\sqrt{d} / 2$. Therefore we can compute
\begin{align*}
	\mathcal{T}_n^{-1} [-1/2, 1/2]^d & \subset \mathcal{T}_n^{-1} E_{2,d}(\sqrt{d} / 2;1,\ldots,1)\\
				&	=  (n \det(D))^{\frac{1}{d}} D^{-1} Q^T  E_{2,d}(\sqrt{d} / 2;1,\ldots,1) \\
		    & = (n \det(D))^{\frac{1}{d}} D^{-1} E_{2,d}(\sqrt{d} / 2;1,\ldots,1) \\
		    & = E_{2,d}\left(R_n;\lambda_1^{-1},\ldots,\lambda_d^{-1} \right) ,
\end{align*}
where $R_n = \frac{\sqrt{d}}{2} (n \det(D))^{\frac{1}{d}} =\frac{\sqrt{d}}{2} n^{1/d} \prod_{j=1}^d \lambda_j^{1/d}$. The discrete $\ell_2$-ellipsoid 
\begin{equation} \label{eqn_coveringset}
    K_n := \mathcal{E}_{2,d}(R_n; \lambda_1^{-1},\ldots,\lambda_d^{-1}) := E_{2,d}\left(R_n;\lambda_1^{-1},\ldots,\lambda_d^{-1} \right) \cap \Z
\end{equation}
is our desired, easily accessible finite set that covers the pre-image of $X_n$. 
In order to determine the complexity of our enumeration algorithm, we have to bound the cardinality $|K_n|$ of $K_n$. As a special case of Proposition \ref{enum_ellipse} 
we obtain the following result on this cardinality.

\begin{satz} Let $K_n$, $X_n$ be given by \eqref{eqn_coveringset}, \eqref{lambdas} and \eqref{Xn}. \\
{\em (i)} If $n>2^{3d/2}$ then the cardinality $|K_n|$ is bounded from below and above by
\begin{equation}\label{K_n_bound}
	n \Big( 1  -  \frac{2^{3/2}}{n^{1/d}} \Big)^d \frac{(d\pi)^{d/2}}{2^d\Gamma(d/2+1)} \quad \leq \quad |K_n| \quad \leq \quad n \, 
	\Big(1  +  \frac{2^{3/2}}{n^{1/d}} \Big)^d \frac{(d\pi)^{d/2}}{2^d\Gamma(d/2+1)}\,.
\end{equation}
{\em (ii)} As a consequence, we obtain the limit statements
\begin{equation}\label{f3}
      \lim\limits_{n\to \infty} |K_n|/n = \lim\limits_{n\to\infty} |K_n|/|X_n| = \mathrm{vol}((\sqrt{d}/2)B_2^d)  \leq \Big(\frac{\pi e}{2}\Big)^{d/2}\,.
\end{equation}
\end{satz}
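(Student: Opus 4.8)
The plan is to deduce both parts directly from Proposition \ref{enum_ellipse}(ii)--(iii), specialised to $p=2$ and $\mu_j=\lambda_j^{-1}$ with the $\lambda_j$ from \eqref{lambdas}. The orthogonality proved above is exactly what makes this possible: writing $\mathcal{T}_n=(n\det D)^{-1/d}QD$ with $Q\in\mathrm{SO}_d$ and $D=\mathrm{diag}(\lambda_1,\ldots,\lambda_d)$, the preimage $\mathcal{T}_n^{-1}[-1/2,1/2]^d$ sits inside the \emph{axis-aligned} ellipsoid $E_{2,d}(R_n;\lambda_1^{-1},\ldots,\lambda_d^{-1})$ computed just before the statement, so $K_n=\mathcal{E}_{2,d}(R_n;\mu)$ is precisely the set counted by Proposition \ref{enum_ellipse}, with $\varrho=\min\{2,1\}=1$. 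Absent orthogonality the preimage would be a rotated (cross-term-carrying) ellipsoid and the recursive counting of \eqref{capZ} would not apply.

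First I would substitute the specific data into Proposition \ref{enum_ellipse}(ii). The decisive simplification is that the determinant $\det D=\prod_j\lambda_j$ cancels: since $\mathrm{vol}(E_{2,d}(1;\mu))=\mathrm{vol}(B_2^d)\prod_j\lambda_j^{-1}$ while $R_n^d=(\sqrt{d}/2)^d\,n\det D$, one obtains the clean identity
\[
R_n^d\,\mathrm{vol}(E_{2,d}(1;\mu)) \;=\; \frac{d^{d/2}}{2^d}\,n\,\frac{\pi^{d/2}}{\Gamma(d/2+1)} \;=\; n\,\frac{(d\pi)^{d/2}}{2^d\Gamma(d/2+1)}\,,
\]
which is exactly the leading factor in \eqref{K_n_bound}. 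Writing $(R_n\pm r(\mu,2))^d=R_n^d\,(1\pm r(\mu,2)/R_n)^d$, part (i) then reduces to the elementary estimate $r(\mu,2)/R_n\le 2^{3/2}/n^{1/d}$. For this I would insert $\sum_j\lambda_j^2=d+2d(d-1)=d(2d-1)$ into $r(\mu,2)=\tfrac12(\sum_j\lambda_j^2)^{1/2}$ and use $\prod_j\lambda_j^{1/d}=\sqrt{2d}\,2^{-1/(2d)}$ to bound $R_n$ from below (this in fact yields the sharper constant $2^{1/2}$, which is comfortably below $2^{3/2}$). The hypothesis $n>2^{3d/2}$ guarantees $r(\mu,2)/R_n<1$, so simultaneously $R_n>r(\mu,2)$ (hence Proposition \ref{enum_ellipse} applies) and the lower envelope $(1-2^{3/2}n^{-1/d})^d$ stays positive; monotonicity of $x\mapsto x^d$ on $(0,\infty)$ then converts the two ellipsoid bounds into \eqref{K_n_bound}.

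For part (ii) I would divide \eqref{K_n_bound} by $n$ and let $n\to\infty$: both envelopes $(1\pm 2^{3/2}n^{-1/d})^d$ tend to $1$, so by squeezing $|K_n|/n\to\frac{(d\pi)^{d/2}}{2^d\Gamma(d/2+1)}$, and this constant is recognised as $\mathrm{vol}((\sqrt{d}/2)B_2^d)=(\sqrt{d}/2)^d\,\mathrm{vol}(B_2^d)$. The second limit in \eqref{f3} follows by factoring $|K_n|/|X_n|=(|K_n|/n)\cdot(n/|X_n|)$ and invoking $|X_n|=N(n)=n+O(\log^{d-1}n)$ from \eqref{numnodes}, so that $n/|X_n|\to 1$.

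Finally, the bound $\mathrm{vol}((\sqrt{d}/2)B_2^d)\le(\pi e/2)^{d/2}$ in \eqref{f3} is the only genuinely external ingredient. After cancelling $\pi^{d/2}$ and $2^d$ it is equivalent to $\Gamma(d/2+1)\ge(d/(2e))^{d/2}=((d/2)/e)^{d/2}$, i.e.\ $\Gamma(x+1)\ge(x/e)^x$ with $x=d/2$, which is the standard Stirling lower bound $\Gamma(x+1)\ge\sqrt{2\pi x}\,(x/e)^x\ge(x/e)^x$ valid for $x\ge 1/2$. The main (and essentially only) obstacle is careful bookkeeping: tracking the cancellation of $\det D$ and verifying the envelope constant; everything else is a direct specialisation of Proposition \ref{enum_ellipse} plus one Stirling estimate.
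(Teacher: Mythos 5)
Your proof is correct and follows essentially the same route as the paper: both specialise Proposition \ref{enum_ellipse} to $p=2$, $\mu_j=\lambda_j^{-1}$ (so $\varrho=1$), exploit the cancellation of $\det D$ so that $R_n^d\,\mathrm{vol}(E_{2,d}(1;\mu))=n\,(d\pi)^{d/2}/(2^d\Gamma(d/2+1))$, bound the perturbation $r(\mu,2)$ relative to $R_n$ by a constant times $n^{-1/d}$, and finish with \eqref{numnodes} and $\Gamma(1+x)\ge (x/e)^x$. The only immaterial deviations are that you obtain the sharper envelope constant $2^{1/2}$ where the paper settles for $2^{3/2}$, and that you deduce the limits in (ii) by squeezing from part (i) rather than invoking Proposition \ref{enum_ellipse}(iii) directly.
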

\begin{proof}
We apply Proposition \ref{enum_ellipse} with $p=2$, $\mu_1=d^{-1/2}$, $\mu_2= ... = \mu_d = (2d)^{-1/2}$ and $R_n = (\sqrt{d}/2)n^{1/d}\prod_{j=1}^d \mu_j^{-1/d}$
for $n\in \N$. Due to $\Gamma(3/2) = \sqrt{\pi}/2$ we obtain from Proposition \ref{enum_ellipse}
$$
      \frac{\pi^{d/2}}{\Gamma(d/2+1)}\prod\limits_{j=1}^d \mu_j = \lim\limits_{n\to \infty} \frac{|\mathcal{E}_{2,d}(R_n;\mu)|}{R_n^d}
      = \Big(\prod\limits_{j=1}^d \mu_j\Big)(\sqrt{d}/2)^{-d}\lim\limits_{n\to\infty} |K_n|/n\,,
$$
which immediately implies the second identity in \eqref{f3}. Due to $|X_n| = N(n) = n + O((\log n)^{d-1})$, see \eqref{numnodes} above, we obtain the first identity. 
The inequality is a consequence of $\Gamma(1+x) \geq (x/e)^x$. 

It remains to prove \eqref{K_n_bound}. By Proposition \ref{enum_ellipse}, (ii), we have ($\varrho=1$)
\begin{equation}\label{f4}
      (R_n-r(\mu,2))^d\frac{\pi^{d/2}}{\Gamma(d/2+1)} \prod_{j=1}^d \mu_j \leq |K_n|  \leq (R_n+r(\mu,2))^d\frac{\pi^{d/2}}{\Gamma(d/2+1)} \prod_{j=1}^d \mu_j\,.
\end{equation}
By the definition of the $R_n$ we have $R_n\prod_{j=1}^d \mu_j^{1/d} = n^{1/d}\sqrt{d}/2$\,. Moreover, the special choice of the $\mu_j$'s gives
$$
   r(\mu,2)\prod_{j=1}^d \mu_j^{1/d}= 2^{-\frac{d-1}{2d}}\cdot \sqrt{2d^2-d}\frac{1}{\sqrt{d}} \leq \sqrt{2d}\,.
$$
Plugging this into \eqref{f4} yields \eqref{K_n_bound}.\end{proof}

One can see, that the cardinality $|K_n|$ scales linear in $n$, where the factor depends exponentially on the dimension $d$.
The true number of discrete lattice points in $K_n$ that have to be ``seen'' is given in Table \ref{tab_complexity2d} for dimensions $d=2,4,8,16$. 
The relative overhead $|K_n||X_n|^{-1}$ converges to a constant smaller than $2.07^d$ for $n$ tending to infinity, which outlines the complexity of the enumeration algorithm with respect to $N$, 
where $N = N(n) = |X_n|$. 

\section*{Acknowledgment}
The authors acknowledge the fruitful discussions with D. Bazarkhanov, A. Hinrichs, W. Sickel, V.N.
Temlyakov and M. Ullrich on the topic of this paper. Tino Ullrich gratefully acknowledges support by the German Research
Foundation (DFG) and the Emmy-Noether programme, Ul-403/1-1. Jens Oettershagen was supported by the DFG via project GR-1144/21-1 and the CRC $1060$.

\newpage

\section*{Appendix}

\begin{table}[H]
\centering
\begin{tabular}{|l|l|l|l|}
\hline
\multicolumn{4}{ |c| }{Dimension $d=2$}\\
\hline
scaling factor $n$ & cubature points in $X_n$ & ellipsoid points $|K_n|$ & relative overhead $|K_n||X_n|^{-1}$ \\
\hline
64&65&101&1.55\\
\hline
256&257&409&1.59\\
\hline
1024&1027&1599&1.56\\
\hline
4096&4095&6427&1.57\\
\hline
16384&16383&25735&1.57\\
\hline
65536&65539&102951&1.57\\
\hline
262144&262145&411813&1.57\\
\hline
1048576&1048579&1647103&1.57\\
\hline
\end{tabular}

\begin{tabular}{|l|l|l|l|}
\hline
\multicolumn{4}{ |c| }{Dimension $d=4$}\\
\hline
scaling factor $n$ & cubature points in $X_n$ & ellipsoid points $|K_n|$ & relative overhead $|K_n||X_n|^{-1}$\\
\hline
64&71&347&4.89\\
\hline
256&261&1205&4.62\\
\hline
1024&1025&5061&4.94\\
\hline
4096&4099&20287&4.95\\
\hline
16384&16385&81105&4.95\\
\hline
65536&65533&324241&4.95\\
\hline
262144&262143&1297123&4.95\\
\hline
1048576&1048609&5176701&4.95\\
\hline
\end{tabular}

\begin{tabular}{|l|l|l|l|}
\hline
\multicolumn{4}{ |c| }{Dimension $d=8$}\\
\hline
scaling factor $n$ & cubature points in $X_n$ & ellipsoid points $|K_n|$ & relative overhead $|K_n||X_n|^{-1}$\\
\hline
64&79&4459&56.44\\
\hline
256&271&15395&56.81\\
\hline
1024&1067&63299&59.32\\
\hline
4096&4113&267005&64.92\\
\hline
16384&16413&1077433&65.65\\
\hline
65536&65645&4231533&64.46\\
\hline
262144&262263&16729291&63.79\\
\hline
1048576&1048779&68078523&64.91\\
\hline
\end{tabular}

\begin{tabular}{|l|l|l|l|}
\hline
\multicolumn{4}{ |c| }{Dimension $d=16$}\\
\hline
scaling factor $n$ & cubature points in $X_n$ & ellipsoid points $|K_n|$ & relative overhead $|K_n||X_n|^{-1}$\\
\hline
64&423&751915&1777.58\\
\hline
256&967&4349507&4497.94\\
\hline
1024&2043&17758079&8692.16\\
\hline
4096&5835&58780787&10073.83\\
\hline
16384&18901&232153093&12282.58\\
\hline
65536&69353&969855677&13984.34\\
\hline
262144&267257&4086738257&15291.42\\
\hline
1048576&1054837&16642145301&15776.98\\
\hline
\end{tabular}

\caption{Cardinalities of the sets of Frolov-cubature points $X_n$, the bounding ellipsoids $K_n$ and the relative overhead.} \label{tab_complexity2d}
\end{table}

\end{document}